\documentclass[10pt,12pt]{amsart}
\usepackage{amsmath}
\usepackage{amsfonts}
\usepackage{amssymb}

\setcounter{MaxMatrixCols}{10}

\newtheorem{theorem}{Theorem}

\newtheorem{corollary}[theorem]{Corollary}

\newtheorem{definition}[theorem]{Definition}

\input{tcilatex}

\begin{document}
\title[Continuous dependence on parameters]{Continuous dependence on
parameters for second order discrete BVP's}
\author{ Marek Galewski, Szymon G\l \c ab}
\maketitle

\begin{abstract}
Using min-max inequality we investigate the existence of solutions and thier
dependence on parameters for some second order discrete boundary value
problem. The approach is based on variational methods and solutions are
obtained as saddle points to the Euler action functional.
\end{abstract}

\section{Introduction}

Boundary value problems governed by discrete equations have received some
attention lately by both variational and topological approach. The
variational techniques applied for discrete problems include, among others,
the mountain pass methodology, the linking theorem, the Morse theory, the
three critical point, compare with \cite{agrawal}, \cite{caiYu}, \cite%
{AppMathLett}, \cite{TianZeng}, \cite{zhangcheng}, \cite{nonzero}. Moreover,
the fixed point approach is in fact much more prolific in the case of
discrete problem and covers the techniques already applied for continuous
problems, see for example \cite{FPAgrawal}, \cite{FPYangPing}, with both
list of references far from being exhaustive. \bigskip

While in the literature mainly the problem of the existence of solutions and
their multiplicity is considered, we are going to go a bit further and
investigate also the dependence on a functional parameter $u$ for the
following discrete boundary value problem which is a saddle -point type
system. Let $D>0$ be fixed. The problem which we consider reads 
\begin{equation}
\left\{ 
\begin{array}{ll}
\Delta ^{2}x(k-1)=F_{x}(k,x(k),y(k),u(k)),\bigskip  &  \\ 
\Delta ^{2}y(k-1)=-F_{y}(k,x(k),y(k),u(k)),\bigskip  &  \\ 
x(0)=x(T+1)=y(0)=y(T+1)=0, & 
\end{array}%
\right.   \label{zad}
\end{equation}%
where $F:[1,T]\times \mathbb{R}\times \mathbb{R}\times \left[ -D,D\right]
\rightarrow \mathbb{R}$ is a continuous function differentiable with respect
to the second and the third variable, 
\begin{equation*}
u\in L_{D}=\{u\in C([1,T],\mathbb{R}):||u||_{C}\leq D\},
\end{equation*}%
where $||u||_{C}$ denotes the classical maximum norm $||u||_{C}=\max_{k\in
\lbrack 1,T]}|u(k)|$ and $[a,b]$ for $a<b$, $a,b\in 
\mathbb{Z}
$ denotes a discrete interval $\{a,a+1,...,b\}$. By a solution to (\ref{zad}%
) we mean a function $x:[0,T+1]\rightarrow \mathbb{R}$\ which satisfies the
given equation and the associated boundary conditions.\bigskip 

Such type of a difference equation as (\ref{zad}) may arise from evaluating
the Dirichlet boundary value problem%
\begin{equation*}
\begin{array}{l}
\frac{d^{2}}{dt^{2}}x=G_{x}\left( t,x,y,u\right) \text{,}\bigskip \text{ }%
\frac{d^{2}}{dt^{2}}y=-G_{y}\left( t,x,y,u\right) \text{,} \\ 
0<t<1\text{, }x\left( 0\right) =x(1)=0,y\left( 0\right) =y(1)=0%
\end{array}%
\end{equation*}%
where $G:\left[ 0,1\right] \times \mathbb{R}\times \mathbb{R}\times \mathbb{R%
}\rightarrow \mathbb{R}$ is continuous and subject to some growth
conditions. Such a continuous problem subject to a functional parameter has
been considered in \cite{JS}. \ \bigskip 

The question whether the system depends continuously on a parameter is vital
in context of the applications, where the measurements are known with some
accuracy. This question is even more important when \ the solution to the
problem under consideration is not unique as is the case of the present
note. In the boundary value problems for differential equations there are
some results towards the dependence of a solution on a functional parameter,
see \cite{LedzewiczWalczak}, \cite{JS} with references therein. This is not
the case with discrete equations where we have only some results which use
the critical point theory, see \cite{w}. The approach of this note is
different from this of \cite{w} since it does not relay on coercivity
arguments but on a min-max inequality due to Ky Fan, see \cite{nirenberg}.
In our approach we use some ideas developed in \cite{JS} suitable modified
due to the finite dimensionality of the space under consideration.

The following results will be used in the sequel, see \cite{nirenberg}.

\begin{theorem}[Fan's Min--Max Theorem]
\label{kYfanTh}Let $X$ and $Y$ be Hausdorff topological vector spaces, $%
A\subset X$ and $B\subset Y$ be convex sets, and $J:A\times B\rightarrow 
\mathbb{R}$ be a function which satisfies the following conditions:

\begin{itemize}
\item[(i)] for each $y\in B$, the functional $x\rightarrow J(x,y)\rightarrow 
\mathbb{R}$ convex and lower semi-continuous on $A$;

\item[(ii)] for each $x\in A$, the functional $y\rightarrow
J(x,y)\rightarrow \mathbb{R}$ is concave and upper semi-continuous on $B$;

\item[(iii)] for some $x_0\in A$ and some $\delta_0<\inf_{x\in A}\sup_{y\in
B}J(x,y)$, the set $\{y\in B:J(x_0,y)\}$ is compact.
\end{itemize}

Then 
\begin{equation*}
\sup_{y}\inf_{x}J(x,y)=\inf_{x}\sup_{y}J(x,y).
\end{equation*}
\end{theorem}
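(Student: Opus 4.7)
The inequality $\sup_y \inf_x J \le \inf_x \sup_y J$ is immediate from the pointwise bound $J(x,y) \ge \inf_{x'} J(x', y)$, so only the reverse inequality requires work. The plan is to argue by contradiction: write $c := \sup_y \inf_x J$ and $d := \inf_x \sup_y J$, assume $c < d$, and fix $\alpha \in \R$ with $\max(c,\delta_0) < \alpha < d$.

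For each $x \in A$ introduce the super-level set $K_x := \{y \in B : J(x,y) \ge \alpha\}$. Condition (ii) makes $K_x$ closed (upper semicontinuity) and convex (concavity), and the inequality $\sup_y J(x,y) \ge d > \alpha$ forces $K_x \neq \emptyset$. Condition (iii), combined with the choice $\alpha > \delta_0$, exhibits $K_{x_0}$ as a closed subset of the compact set supplied by (iii), hence $K_{x_0}$ is itself compact. A common point $y^\ast \in \bigcap_{x \in A} K_x$, if it existed, would satisfy $\inf_x J(x, y^\ast) \ge \alpha > c$, contradicting the definition of $c$; so it is enough to produce such a $y^\ast$. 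By compactness of $K_{x_0}$ and closedness of each $K_x$ relative to $B$, this reduces to the finite intersection property: for every $x_1, \dots, x_n \in A$,
\begin{equation*}
K_{x_0} \cap K_{x_1} \cap \dots \cap K_{x_n} \ne \emptyset.
\end{equation*}

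The heart of the argument is this finite intersection property, which is where the convexity of $A$ and condition (i) finally intervene. I would derive it by applying the Knaster--Kuratowski--Mazurkiewicz lemma (equivalently, Brouwer's fixed point theorem) on the finite-dimensional simplex spanned by $x_0, x_1, \dots, x_n$: the convex--lsc structure of $x \mapsto J(x, y)$ furnished by (i) translates a hypothetical failure of the finite intersection into a violation of the KKM covering condition on this simplex. The KKM conclusion then delivers a point $y$ of $K_{x_0}$ lying in every $K_{x_i}$.

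The principal obstacle is this KKM/fixed-point step. Constructing the level sets $K_x$ and assembling closedness, convexity, and the compactness supplied by (iii) is a formal exercise; producing the finite intersection is the one place where a nontrivial topological tool must be invoked, and the one place where conditions (i) and (ii) on $J$ interlock simultaneously rather than acting independently.
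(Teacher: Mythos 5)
First, a point of reference: the paper does not prove this theorem at all --- it is quoted as a known result from Nirenberg's lecture notes \cite{nirenberg} --- so there is no internal proof to compare against, and your attempt has to be judged on its own. On its own terms, your reduction is the correct and standard one: the inequality $\sup_y\inf_x J\le\inf_x\sup_y J$ is trivial; assuming $c<d$ and picking $\alpha$ with $\max(c,\delta_0)<\alpha<d$, the super-level sets $K_x=\{y\in B: J(x,y)\ge\alpha\}$ are nonempty (since $\sup_y J(x,y)\ge d>\alpha$), convex and closed in $B$ by (ii), and $K_{x_0}$ is compact because $\alpha>\delta_0$ places it inside the compact set of (iii) (you also correctly repaired the typo there: the hypothesis should read $\{y\in B: J(x_0,y)\ge\delta_0\}$). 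A common point of all $K_x$ contradicts the definition of $c$, and by compactness of $K_{x_0}$ everything reduces to the finite intersection property. All of this is sound.

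The genuine gap is the finite intersection step itself, which you explicitly defer, and the mechanism you gesture at does not work as stated. The classical Knaster--Kuratowski--Mazurkiewicz lemma concerns closed sets $C_0,\dots,C_n$ \emph{inside the simplex} $\Delta=\mathrm{co}\{x_0,\dots,x_n\}$, indexed by its vertices, with $\mathrm{co}\{x_{i_0},\dots,x_{i_k}\}\subset C_{i_0}\cup\dots\cup C_{i_k}$; your sets $K_{x_i}$ live in $B$, a different space entirely, so there is no ``KKM covering condition on this simplex'' to violate without a nontrivial construction that you have not supplied. The two standard repairs are: (a) Ky Fan's 1961 generalization of KKM to topological vector spaces, applied not to the $K_{x_i}$ directly but to an auxiliary KKM family built on the product $A\times B$ (this is where (i) and (ii) genuinely interlock, and it typically needs two levels $\alpha<\beta<d$ rather than one); or (b) a separation-type argument exploiting \emph{full} convexity/concavity, which the hypotheses grant: from (i), $J\bigl(\sum_i\lambda_i x_i,y\bigr)\le\sum_i\lambda_i J(x_i,y)$, so $\inf_{\lambda\in\Delta_n}\sup_{y}\sum_i\lambda_i J(x_i,y)\ge d>\alpha$, and a finite-dimensional minimax theorem on $\Delta_n\times B$ (with compactness in $y$ recovered by including $x_0$ in the finite family and restricting to the compact $\delta_0$-level set, and attainment secured by upper semicontinuity) yields $\sup_y\min_i J(x_i,y)\ge\alpha$, i.e.\ $\bigcap_{i}K_{x_i}\ne\emptyset$. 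Either route requires real work --- handling noncompact $B$, semicontinuity in place of continuity, and the base-case minimax on the simplex --- and none of it is present; as submitted, the proposal is a correct skeleton with the load-bearing step missing.
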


\begin{definition}
Let $(X,\tau )$ be a Hausdorff topological space and let $%
(A_{n})_{n=1}^{\infty }$ be a sequence of nonempty subsets of $X$. The set
of accumulation points of sequences $(a_{n})_{n=1}^{\infty }$ with $a_{n}\in
A_{n}$ for $n=1,2,3,...$ is called the upper limit of $(A_{n})_{n=1}^{\infty
}$ and denoted by $\limsup A_{n}$.
\end{definition}

\section{Variational framework for problem (\protect\ref{zad})}

Solutions to (\ref{zad}) will be investigated in the space 
\begin{equation*}
H=\{x:[0,T+1]\rightarrow \mathbb{R}:x(0)=x(T+1)=0\}
\end{equation*}
considered with the norm 
\begin{equation*}
||x||=\left( \sum_{k=1}^{T+1}|\Delta x(k-1)|^{2}\right) ^{1/2}.
\end{equation*}%
Then $(H,||\cdot ||)$ becomes a Hilbert space. For any $m\geq 2$ let $c_{m}$
be the smallest positive constant such that 
\begin{equation*}
\sum_{k=1}^{T}|x(k)|^{m}\leq c_{m}\cdot \sum_{k=1}^{T+1}|\Delta x(k-1)|^{m}
\end{equation*}%
for any $x\in H$; see \cite[Lemma 1]{MRT}. \bigskip

Since the approach of present note is a variational one we investigate the
action functional $J_{u}:H\times H\rightarrow R$, corresponding to problem (%
\ref{zad}). For a fixed parameter $u\in L_{D}$, $J_{u}$ is of the form 
\begin{equation*}
J_{u}(x,y)=\sum_{k=1}^{T+1}\frac{|\Delta x(k-1)|^{2}}{2}-\frac{|\Delta
y(k-1)|^{2}}{2}+\sum_{k=1}^{T}F(k,x(k),y(k),u(k)).
\end{equation*}

We assume that $F$ has the following properties:\bigskip

\begin{itemize}
\item[H1] $F:[1,T]\times \mathbb{R}\times \mathbb{R}\times \mathbb{R}%
\rightarrow \mathbb{R}$ is a continuous function which is differentiable
with respect to the second and the third variable; $F_{x},F_{y}:[1,T]\times 
\mathbb{R}\times \mathbb{R}\times \mathbb{R}\rightarrow \mathbb{R}$ are
continuous functions. \bigskip 

\item[H2] For any fixed $y\in H$ there are a constant $\beta _{1},$ a
function $\gamma _{1}:\left[ 1,T\right] \rightarrow \mathbb{R}$ and a
constant $\alpha _{1}<1/(2c_{2})$ such that 
\begin{equation*}
F(k,x,y(k),u)\geq -\alpha _{1}|x|^{2}+\beta _{1}x+\gamma _{1}(k)
\end{equation*}%
for all $x\in \mathbb{R}$, all $u\in \mathbb{R}$, $\left\vert u\right\vert
\leq D$ and all $k\in \left[ 1,T\right] .$\bigskip 

\item[H3] For any fixed $x\in H$ there are a constant $\beta _{2},$ a
function $\gamma _{2}:\left[ 1,T\right] \rightarrow \mathbb{R}$ and a
constant $\alpha _{2}<1/(2c_{2})$ such that 
\begin{equation*}
F(k,x(k),y,u)\leq \alpha _{2}|y|^{2}+\beta _{2}y+\gamma _{2}(k)
\end{equation*}%
for all $y\in \mathbb{R}$, all $u\in \mathbb{R}$, $\left\vert u\right\vert
\leq D$ and all $k\in \left[ 1,T\right] .$\bigskip 

\item[H4] Functional $x\rightarrow J_{u}(x,y)$ is convex for all $y\in H$, $%
u\in L_{D}$.\bigskip 

\item[H5] Functional $y\rightarrow J_{u}(x,y)$ is concave for all $x\in H$, $%
u\in L_{D}$.\bigskip 
\end{itemize}

We observe that with any fixed $u\in L_{D}$ functional $J_{u}$ is
continuous. With the aid of Theorem \ref{kYfanTh} we are able to find saddle
points for functional $J_{u}$. Since $J_{u}$ is differentiable in the sense
of G\^{a}teaux, it is apparent that such points are the critical points to $%
J_{u}$. Since in turn critical points to $J_{u}$ constitute solutions to (%
\ref{zad}), we arrive at existence result once we get the existence of
saddle points. Moreover, since the spaces in which we work are finite
dimensional one, there is no need to distinguish between the weak and the
strong solutions.

\section{Existence of saddle point solutions}

\begin{theorem}[Existence of saddle points]
\label{ExistSaddlePoints}Assume that conditions H1-H2 hold. Let $u\in L_{D}$
be fixed. Then it follows that\newline
(A) There is a saddle point $(x_{u},y_{u})$ for the functional $J$;\newline
(B) There are balls $B_{1}=\{x:||x||\leq r_{1}\}$ and $B_{2}=\{y:||y||\leq
r_{2}\}$ such that $(x_{u},y_{u})\in B_{1}\times B_{2}$;\newline
(C) The set of all saddle points of $J_{u}$ is compact.
\end{theorem}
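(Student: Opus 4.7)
The plan is to apply the Ky Fan min--max theorem (Theorem~\ref{kYfanTh}) to the functional $J_{u}$ on $H\times H$ and then upgrade the resulting equality $\inf_{x}\sup_{y}J_{u}=\sup_{y}\inf_{x}J_{u}$ to a genuine saddle point. The convex/concave hypotheses (i), (ii) of Fan's theorem are supplied directly by H4 and H5, together with the continuity of $J_{u}$ on the finite-dimensional space $H\times H$. For hypothesis (iii) I would fix $x_{0}=0$ and use H3 to estimate
\begin{equation*}
J_{u}(0,y)\le -\tfrac{1}{2}\|y\|^{2}+\alpha_{2}c_{2}\|y\|^{2}+\beta_{2}\sum_{k=1}^{T}y(k)+\sum_{k=1}^{T}\gamma_{2}(k);
\end{equation*}
since $\alpha_{2}<1/(2c_{2})$, the map $y\mapsto J_{u}(0,y)$ is bounded above and has bounded upper level sets, hence compact ones in finite dimension. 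Choosing $\delta_{0}$ strictly less than $\inf_{x}\sup_{y}J_{u}$ (shown to be finite below) completes the verification.

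Next I would show that the two marginal functions
\begin{equation*}
g(x):=\sup_{y\in H}J_{u}(x,y),\qquad h(y):=\inf_{x\in H}J_{u}(x,y)
\end{equation*}
attain their extrema. The function $g$ is convex and lower semi-continuous (as a supremum of continuous convex functions), and H2 applied at $y=0$ yields
\begin{equation*}
g(x)\ge J_{u}(x,0)\ge (\tfrac{1}{2}-\alpha_{1}c_{2})\|x\|^{2}+\beta_{1}\sum_{k=1}^{T}x(k)+\sum_{k=1}^{T}\gamma_{1}(k),
\end{equation*}
which is coercive because $\alpha_{1}<1/(2c_{2})$. Thus $g$ attains its infimum at some $x_{u}$ with $\|x_{u}\|\le r_{1}$, the constant $r_{1}$ depending only on the H2 data and in particular not on $u\in L_{D}$. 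Symmetrically, $h$ is concave, upper semi-continuous, and coercive downward by H3, so it attains its supremum at some $y_{u}$ with $\|y_{u}\|\le r_{2}$. Combining the obvious sandwich $g(x_{u})\ge J_{u}(x_{u},y_{u})\ge h(y_{u})$ with Fan's equality $\inf g=g(x_{u})=h(y_{u})=\sup h$ forces every inequality to be an equality, which is precisely the saddle-point condition $\sup_{y}J_{u}(x_{u},y)=J_{u}(x_{u},y_{u})=\inf_{x}J_{u}(x,y_{u})$. This proves (A) and (B) in one stroke.

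For (C), any saddle point $(\tilde{x},\tilde{y})$ automatically satisfies $g(\tilde{x})=\inf g$ and $h(\tilde{y})=\sup h$, so the same coercivity estimates pin every saddle point into $B_{1}\times B_{2}$. For closedness, let $(x_{n},y_{n})$ be saddle points converging to $(x^{*},y^{*})$; the defining inequalities $J_{u}(x_{n},y)\le J_{u}(x_{n},y_{n})\le J_{u}(x,y_{n})$ hold for all $x,y\in H$, and passing to the limit using the continuity of $J_{u}$ gives $J_{u}(x^{*},y)\le J_{u}(x^{*},y^{*})\le J_{u}(x,y^{*})$, so $(x^{*},y^{*})$ is again a saddle point. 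Closed plus bounded in the finite-dimensional space $H\times H$ yields compactness.

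The main obstacle is the jump from the bare min--max equality supplied by Fan's theorem (which only equates two numbers) to the existence of an actual pair $(x_{u},y_{u})$. The sharp thresholds $\alpha_{i}<1/(2c_{2})$ in H2 and H3 are the crucial ingredient: they make the marginal functions $g$ and $h$ coercive in the appropriate direction, forcing attainment of their extrema and, combined with the Fan equality, promoting the min--max to a genuine saddle point with uniform a~priori bounds in $u$.
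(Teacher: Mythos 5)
Your proposal is correct and takes essentially the same route as the paper: the H2/H3 estimates make the marginal functions $g=J_{u}^{+}$ and $h=J_{u}^{-}$ coercive (resp.\ anti-coercive), hence attained on a priori balls $B_{1}$, $B_{2}$, while Fan's theorem (H4--H5, with condition (iii) verified at $x_{0}=0$ exactly as the paper does) supplies the min--max equality. You are merely more explicit at two points the paper glosses over --- the sandwich argument $g(x_{u})\ge J_{u}(x_{u},y_{u})\ge h(y_{u})$ that upgrades the value equality to a genuine saddle point, and the closedness half of (C) --- and note that the hypothesis list ``H1--H2'' in the statement is the paper's own slip, since both proofs genuinely use H3--H5.
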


\begin{proof}
For fixed $y\in H$ using H2 we obtain%
\begin{equation*}
\begin{array}{l}
J_{u}(x,y)\geq \sum_{k=1}^{T+1}\left( \frac{|\Delta x(k-1)|^{2}}{2}-\frac{%
|\Delta y(k-1)|^{2}}{2}-\alpha _{1}|x(k)|^{2}+\beta _{1}x(k)+\gamma
_{1}\right) \geq \bigskip  \\ 
\left( \frac{1}{2}-c_{2}\alpha _{1}\right) ||x||^{2}+\tilde{\beta}_{1}||x||+%
\tilde{\gamma}_{1},%
\end{array}%
\end{equation*}
where $\tilde{\beta}_{1}>0$ depends only on $\beta _{1}$ (note that $||x||$
and $\sum_{k=1}^{T+1}|x(k)|$ are equivalent norms, since $H$ is
finite-dimensional) and $\tilde{\gamma}_{1}>0$ depends only on $\gamma _{1}$
and $y$. Since $\frac{1}{2}-c_{2}\alpha _{1}>0$, the functional $J_{u}(x,y)$
is coercive on $H$. By H1 and H4 it is continuous and convex for each $u$.
Put 
\begin{equation*}
J_{u}^{-}(y)=\min_{x}J_{u}(x,y).
\end{equation*}%
By H5 the functional $J_{u}^{-}$ is concave. By H3 we obtain that 
\begin{equation}
\begin{array}{l}
J_{u}^{-}(y)\leq J_{u}(0,y)\leq \bigskip  \\ 
\sum_{k=1}^{T+1}\left( -\frac{|\Delta y(k-1)|^{2}}{2}+\alpha
_{2}|y(k)|^{2}+\beta _{2}y(k)+\gamma _{2}\right) \leq \bigskip  \\ 
\left( -\frac{1}{2}+c_{2}\alpha _{1}\right) ||y||^{2}+\tilde{\beta}_{2}||y||+%
\tilde{\gamma}_{2},%
\end{array}
\label{eq1}
\end{equation}%
where $\tilde{\beta}_{2}>0$ depends only on $\beta _{2}$ and $\tilde{\gamma}%
_{2}>0$ depends only on $\gamma _{2}$. Since the constant $-\frac{1}{2}%
+c_{2}\alpha _{1}$ is negative, then $J_{u}^{-}$ is anti-coercive. Hence it
attains its supremum at some point $y_{u}$. By H2 we have%
\begin{equation*}
\begin{array}{l}
J_{u}^{-}(y_{u})\geq J_{u}^{-}(0)=\min_{x}J_{u}(x,0)\geq \bigskip  \\ 
\min_{x}\left( \left( \frac{1}{2}-c_{2}\alpha _{1}\right) ||x||^{2}+%
\overline{\beta }_{1}||x||+\gamma _{1}\right) =\gamma _{1}.%
\end{array}%
\end{equation*}
Since $J_{u}^{-}$ is anti-coercive, there is $r_{2}>0$ such that $%
J_{u}^{-}(y)<\gamma _{1}$ for every $||y||>r_{2}$. Since $J_{u}^{-}$ is
continuous the set $\{y:J_{u}^{-}(y)\geq \gamma _{1}\}$ is compact and is
contained in $B_{2}$. Hence each $y_{u}$ is in $B_{2}$.\bigskip 

Analogously one can show that there is $x_{u}$ with 
\begin{equation*}
J_{u}^{+}(x_{u})=\min_{x}J_{u}^{+}=\min_{x}\max_{y}J_{u}(x,y).
\end{equation*}%
Furthermore, there is a ball $B_{1}$ with $x_{u}\in B_{1}$ for each such $%
x_{u}$.

We have already showed that for each $x$ there exists $\max_{y}J_{u}(x,y)$.
Hence for some $\delta _{0}$ we have 
\begin{equation*}
\delta _{0}<\min_{x}J_{u}(x,0)\leq \min_{x}\max_{y}J_{u}(x,y).
\end{equation*}%
By (\ref{eq1}) we obtain 
\begin{equation*}
\{y:J_{u}(0,y)\geq \delta _{0}\}\subset \{y:(-1/2+c_{2}\alpha _{1})||y||^{2}+%
\tilde{\beta}_{2}||y||+\tilde{\gamma}_{2}\geq \delta _{0}\}.
\end{equation*}%
Since the set of right hand of inclusion is compact, so is the set $%
\{y:J_{u}(0,y)\geq \delta _{0}\}$. Thus, the assumptions H4 and H5 and Fan's
minimax Theorem \ref{kYfanTh}, give the existence of a saddle point of $%
J_{u} $. Moreover the set of all saddle points of $J_{u}$ is compact.
\end{proof}

\begin{theorem}[Existence of saddle point solutions]
\label{ExistSaddlePointSol}Assume that conditions H1-H5 hold. Let $u\in
L_{D} $ be fixed. Then it follows that there exists is at least one saddle
point $(x_{u},y_{u})\in H\times H$ for the functional $J_{u}$ which solves (%
\ref{zad}).
\end{theorem}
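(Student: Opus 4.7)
The plan is to build directly on Theorem \ref{ExistSaddlePoints}, whose proof in fact uses the full list H1--H5 and therefore already delivers a saddle point $(x_u,y_u)\in H\times H$ of $J_u$. What remains is only to convert the saddle-point property into the two difference equations of (\ref{zad}). Since $H$ is finite dimensional and $F$ is $C^1$ in its second and third arguments by H1, the functional $J_u$ is $C^1$ on $H\times H$. By H4 the map $x\mapsto J_u(x,y_u)$ is convex and $C^1$, so its global minimizer $x_u$ is a critical point; by H5 the map $y\mapsto J_u(x_u,y)$ is concave and $C^1$, so its global maximizer $y_u$ is likewise a critical point. Hence both partial G\^{a}teaux derivatives of $J_u$ vanish at $(x_u,y_u)$.

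The second step is to read off (\ref{zad}) from these vanishing conditions via a routine Euler--Lagrange computation. For any $h\in H$,
\begin{equation*}
D_x J_u(x,y)(h) = \sum_{k=1}^{T+1} \Delta x(k-1)\,\Delta h(k-1) + \sum_{k=1}^{T} F_x(k,x(k),y(k),u(k))\, h(k),
\end{equation*}
and discrete summation by parts, together with $h(0)=h(T+1)=0$, rewrites the first sum as $-\sum_{k=1}^{T}\Delta^2 x(k-1)\, h(k)$. Demanding $D_x J_u(x_u,y_u)(h)=0$ for every $h\in H$ (in particular for $h$ running through the coordinate basis of $H$) yields $\Delta^2 x_u(k-1)=F_x(k,x_u(k),y_u(k),u(k))$ for $k\in [1,T]$, which is the first line of (\ref{zad}). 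The computation for $D_y J_u(x_u,y_u)=0$ is identical up to the sign inherited from the $-|\Delta y(k-1)|^2/2$ term in $J_u$ and produces the second line; the boundary conditions $x_u(0)=x_u(T+1)=y_u(0)=y_u(T+1)=0$ are already built into $H$.

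I do not anticipate a substantial obstacle, since Theorem \ref{ExistSaddlePoints} does all the genuine analytic work (compactness of the level sets, anti-coercivity of $J_u^{-}$, application of Fan's inequality). The one point worth verifying carefully is that the saddle point produced by the min--max identity really satisfies $J_u(x_u,y_u)=\min_x J_u(x,y_u)=\max_y J_u(x_u,y)$, so that H4 and H5 can be invoked to conclude that $x_u$ and $y_u$ are a simultaneous convex minimizer and concave maximizer of the corresponding partial functionals; once this is in place, the passage from critical point to classical solution of (\ref{zad}) is the standard summation-by-parts argument sketched above.
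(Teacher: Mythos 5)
Your proposal is correct and follows essentially the same route as the paper: invoke Theorem \ref{ExistSaddlePoints} for the existence of a saddle point, then observe that since $J_u$ is G\^{a}teaux differentiable the saddle point is a critical point and hence, by the standard summation-by-parts computation, a solution of (\ref{zad}). The paper states this in two lines (with the critical-point-to-solution passage asserted in Section 2), whereas you spell out the Euler--Lagrange calculation and the sign bookkeeping explicitly --- both of which check out.
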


\begin{proof}
By Theorem \ref{ExistSaddlePoints} there is at least one saddle point $%
(x_{u},y_{u})$ for the functional $J_{u}$. Since $J_{u}$ is a G\^{a}teaux
differentiable functional we see that $J_{u}^{^{\prime }}(x_{u},y_{u})=0$
and therefore $(x_{u},y_{u})$ solves (\ref{zad}).
\end{proof}

In order to obtain existence results we do not need to impose conditions
H2-H5 uniformly in $u$. This is not the case when one is interested in the
dependence on parameters, when assumptions must be placed uniformly with
respect to $u$. Indeed, let us consider a following problem 
\begin{equation}
\left\{ 
\begin{array}{l}
\Delta ^{2}x(k-1)=F_{x}(k,x(k),y(k)),\bigskip  \\ 
\Delta ^{2}y(k-1)=-F_{y}(k,x(k),y(k)),\bigskip  \\ 
x(0)=x(T+1)=y(0)=y(T+1)=0,%
\end{array}%
\right.   \label{zad2}
\end{equation}%
where $F:[1,T]\times \mathbb{R}\times \mathbb{R}\rightarrow \mathbb{R}$ is a
continuous function which is differentiable with respect to the second and
the third variable. The action functional $J:H\times H\rightarrow R$,
corresponding to problem (\ref{zad2}) is 
\begin{equation*}
J(x,y)=\sum_{k=1}^{T+1}\frac{|\Delta x(k-1)|^{2}}{2}-\frac{|\Delta
y(k-1)|^{2}}{2}+\sum_{k=1}^{T}F(k,x(k),y(k)).
\end{equation*}%
We assume that\bigskip 

\begin{itemize}
\item[H6] $F:[1,T]\times \mathbb{R}\times \mathbb{R}\rightarrow \mathbb{R}$
is a continuous function which is differentiable with respect to the second
and the third variable; $F_{x},F_{y}:[1,T]\times \mathbb{R}\times \mathbb{R}%
\rightarrow \mathbb{R}$ are continuous functions.\bigskip 

\item[H7] For any fixed $y\in H$ there are a constant $\beta _{1},$ a
function $\gamma _{1}:\left[ 1,T\right] \rightarrow \mathbb{R}$ and a
constant $\alpha _{1}<1/(2c_{2})$ such that 
\begin{equation*}
F(k,x,y(k),u)\geq -\alpha _{1}|x|^{2}+\beta _{1}x+\gamma _{1}(k)
\end{equation*}%
for all $x\in \mathbb{R}$ and all $k\in \left[ 1,T\right] .$\bigskip 

\item[H8] For any fixed $x\in H$ there are a constant $\beta _{2},$ a
function $\gamma _{2}:\left[ 1,T\right] \rightarrow \mathbb{R}$ and a
constant $\alpha _{2}<1/(2c_{2})$ such that 
\begin{equation*}
F(k,x(k),y,u)\leq \alpha _{2}|y|^{2}+\beta _{2}y+\gamma _{2}(k)
\end{equation*}%
for all $y\in \mathbb{R}$ and all $k\in \left[ 1,T\right] .$\bigskip 

\item[H9] Functional $x\rightarrow J_{u}(x,y)$ is convex for any $y\in H$%
.\bigskip 

\item[H10] Functional $y\rightarrow J_{u}(x,y)$ is concave for any $x\in H$%
.\bigskip 
\end{itemize}

Then we have

\begin{corollary}
Assume that conditions H6-H10 hold. Then it follows that there exists is at
least one saddle point $(x,y)\in H\times H$ for the functional $J$ which
solves (\ref{zad}).
\end{corollary}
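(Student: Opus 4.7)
The plan is to deduce this corollary directly from Theorem \ref{ExistSaddlePointSol} by regarding problem (\ref{zad2}) as a degenerate instance of problem (\ref{zad}) in which the nonlinearity does not depend on the parameter. Concretely, I would fix any $D>0$ (for example $D=1$), set
\begin{equation*}
\widetilde{F}(k,x,y,u) := F(k,x,y)\qquad \text{for all }u\in[-D,D],
\end{equation*}
and observe that the action functional $\widetilde{J}_u$ associated with $\widetilde{F}$ coincides with $J$ for every admissible $u\in L_D$. Thus any saddle point of $\widetilde{J}_u$ is a saddle point of $J$, and any critical point of $\widetilde{J}_u$ solves problem (\ref{zad2}).

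The next step is to verify that hypotheses H1--H5 are inherited from H6--H10 for $\widetilde{F}$. Continuity of $\widetilde{F}$ and its partial derivatives in $x$ and $y$ is immediate from H6, so H1 holds. The quadratic lower and upper estimates in H7 and H8 hold by assumption; since the right-hand sides do not involve $u$ at all, they hold uniformly for $|u|\leq D$, which gives H2 and H3 with the same constants $\alpha_i$, $\beta_i$ and functions $\gamma_i$. Finally, H4 and H5 are literally H9 and H10 because $\widetilde{J}_u = J$ for every $u\in L_D$.

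Once H1--H5 have been checked, Theorem \ref{ExistSaddlePointSol} applied to $\widetilde{F}$ with any fixed $u\in L_D$ produces a pair $(x_u,y_u)\in H\times H$ that is a saddle point of $\widetilde{J}_u = J$ and solves the corresponding Euler--Lagrange equations. Since $\widetilde{F}_x(k,x,y,u) = F_x(k,x,y)$ and $\widetilde{F}_y(k,x,y,u) = F_y(k,x,y)$, these equations are precisely (\ref{zad2}), which yields the desired conclusion (the reference to (\ref{zad}) in the statement should read (\ref{zad2})).

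There is essentially no analytical obstacle in this proof; the only thing to be careful about is the cosmetic mismatch in the statements of H7 and H8, which mention a variable $u$ even though $F$ in H6 takes only three arguments. I would treat that as a typographical slip and read H7, H8 as the obvious $u$-free estimates
\begin{equation*}
F(k,x,y(k))\geq -\alpha_1|x|^2+\beta_1 x+\gamma_1(k),\qquad F(k,x(k),y)\leq \alpha_2|y|^2+\beta_2 y+\gamma_2(k),
\end{equation*}
in which form they match the hypotheses needed to run the argument of Theorem \ref{ExistSaddlePoints} verbatim.
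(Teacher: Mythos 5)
Your proof is correct and coincides with the paper's (implicit) argument: the corollary is stated without proof precisely because problem (\ref{zad2}) is the $u$-independent specialization of (\ref{zad}), so defining $\widetilde{F}(k,x,y,u)=F(k,x,y)$ makes H6--H10 yield H1--H5 verbatim and Theorem \ref{ExistSaddlePointSol} applies for any fixed $u\in L_{D}$. You are also right to flag the two typographical slips --- the spurious $u$ in the statements of H7--H8 and the reference to (\ref{zad}) rather than (\ref{zad2}) in the corollary --- and your $u$-free reading of those hypotheses is the intended one.
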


\section{Continuous dependence on parameters}

Now we are interested of the behavior of the sequence of saddle points which
correspond to a sequence of parameters. Dependence on parameters in
investigated through the convergence of the sequence of action functionals
corresponding the sequence of parameters - this approach has already been
applied with some success for the continuous and also the discrete problems,
see \cite{w}, \cite{LedzewiczWalczak}. Let $(u_{n})_{n=1}^{\infty }\subset
L_{D}$ be a sequence of parameters. We put $J_{n}=J_{u_{n}}$ and let 
\begin{equation*}
V_{n}=\{(\overline{x},\overline{y}):J_{n}(\overline{x},\overline{y}%
)=\max_{y}\min_{x}J_{n}(x,y)\}\subset B_{1}\times B_{2}
\end{equation*}%
be the set of all saddle points of $J_{n}$. Due to Theorem \ref%
{ExistSaddlePoints} $V_{n}\neq \emptyset $ for all $n=1,2,...$ .

\begin{theorem}
\label{condepd}Assume that conditions H1-H5 hold. Let $(u_{n})_{n=1}^{\infty
}\subset L_{D}$ be a \ convergent sequence of parameters and $%
u_{n}\rightarrow u_{0}\in L_{D}$ as $n\rightarrow \infty $. Then $\emptyset
\neq \limsup_{n\rightarrow \infty }V_{n}\subset V_{0}$.
\end{theorem}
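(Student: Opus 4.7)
The plan is to exploit the uniform bounds delivered by Theorem \ref{ExistSaddlePoints}: a careful re-reading of its proof shows that the radii $r_1, r_2$ of the balls $B_1, B_2$ containing every saddle point depend only on the constants $\alpha_i, \beta_i, \gamma_i$ from H2--H3, which are uniform over $u$ with $|u|\leq D$. Consequently $V_n \subset B_1 \times B_2$ for every $n$, and this is a fixed compact subset of the finite-dimensional space $H \times H$.

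First I would show that $\limsup_{n} V_n \neq \emptyset$. Picking any $(x_n,y_n) \in V_n$, compactness of $B_1\times B_2$ furnishes a convergent subsequence $(x_{n_k},y_{n_k})\to (x_0,y_0)$, whose limit lies in $\limsup V_n$ by definition.

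For the inclusion $\limsup V_n \subset V_0$, take any $(x_0,y_0)\in \limsup V_n$, with a defining subsequence $(x_{n_k},y_{n_k})\in V_{n_k}$ converging to $(x_0,y_0)$. The saddle-point property gives
\[
J_{n_k}(x_{n_k},y)\;\leq\; J_{n_k}(x_{n_k},y_{n_k})\;\leq\; J_{n_k}(x,y_{n_k})
\]
for every $x,y\in H$. The functional $J_u(x,y)$ is a finite sum involving $F(k,x(k),y(k),u(k))$; H1 makes $F$ continuous in all four arguments, so $(x,y,u)\mapsto J_u(x,y)$ is jointly continuous on $H\times H\times L_D$. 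Since $u_{n_k}\to u_0$ in $L_D$ and the inner arguments converge, I pass to the limit termwise and obtain $J_0(x_0,y)\leq J_0(x_0,y_0)\leq J_0(x,y_0)$ for every $x,y\in H$, which is precisely the statement $(x_0,y_0)\in V_0$.

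The main point that needs care is verifying that the containment $V_n\subset B_1\times B_2$ is genuinely uniform in $n$. The argument inside Theorem \ref{ExistSaddlePoints} must be inspected to check that $r_1$ and $r_2$ come out independent of $u_n$; this relies on H2--H3 furnishing constants that hold uniformly over $|u|\leq D$. Once this uniform compactness is in place the rest is essentially a routine stability statement: compactness yields subsequential limits, joint continuity of $J_u(x,y)$ in $(x,y,u)$ transfers the saddle-point inequality to the limit, and finite-dimensionality removes any distinction between weak and strong convergence.
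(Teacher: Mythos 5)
Your proposal is correct, but it takes a genuinely different route from the paper's for the main inclusion. The paper first proves convergence of the saddle values $a_n=\max_y\min_x J_n(x,y)\rightarrow a_0$, using the uniform convergence $J_n\rightarrow J_0$ on the compact set $B_1\times B_2$, and then argues by contradiction: if $(x_0,y_0)\notin V_0$, then $\eta=J_0(\overline{x},\overline{y})-J_0(x_0,y_0)\neq 0$ for $(\overline{x},\overline{y})\in V_0$, and the estimate $a_n-a_0=\min_x J_n(x,y_n)-J_0(x_0,y_0)\leq J_n(\overline{x},y_n)-J_0(x_0,y_0)$, split into three terms, forces $\limsup_n(a_n-a_0)\leq\eta<0$, contradicting $a_n\rightarrow a_0$. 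You instead pass to the limit directly in the two-sided saddle inequality $J_{n_k}(x_{n_k},y)\leq J_{n_k}(x_{n_k},y_{n_k})\leq J_{n_k}(x,y_{n_k})$ for fixed $x,y$, using joint continuity of $(x,y,u)\mapsto J_u(x,y)$, which is legitimate here since $H$ is finite dimensional and $F$ is continuous (H1). Both arguments rest on the same two pillars, namely the $u$-uniform localization $V_n\subset B_1\times B_2$ (which you rightly single out for verification: the radii in Theorem \ref{ExistSaddlePoints} come from H2 with $y=0$ and H3 with $x=0$ fixed, whose constants are uniform over $|u|\leq D$, a point the paper asserts without comment) and continuity of $F$; but your route is shorter, avoids the detour through value convergence and the case analysis on the sign of $\eta$, and yields slightly more: the limit $(x_0,y_0)$ is shown to be a genuine saddle point of $J_0$, whereas the paper's contradiction establishes only the value identity $J_0(x_0,y_0)=\max_y\min_x J_0(x,y)$ --- which is membership in $V_0$ exactly as the paper literally defines it (a level set of the saddle value), but is a priori weaker than the saddle property. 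Note that both proofs tacitly read membership $(x_n,y_n)\in V_n$ as the saddle-point property rather than the literal level-set condition (the paper uses $J_n(x_n,y_n)=\min_x J_n(x,y_n)$; you use the full two-sided inequality); this is harmless because Theorem \ref{ExistSaddlePoints} actually produces saddle points. The one thing the paper's approach buys in exchange is the by-product $a_n\rightarrow a_0$, i.e.\ continuity of the saddle value in the parameter, which your argument recovers only along convergent subsequences of saddle points.
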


\begin{proof}
At first we observe by continuity of $F$ that $J_{n}$ tends to $J_{0}$
uniformly on $B_{1}\times B_{2}$, where $B_{1},$ $B_{2}$ are defined in
Theorem \ref{ExistSaddlePoints}. We will prove that $\emptyset \neq \limsup
V_{n}\subset V_{0}$. Let $a_{n}=\max_{y}\min_{x}J_{n}(x,y)$ and let $%
\varepsilon >0$. Since $J_{n}$ tends uniformly to $J_{0}$, then $%
J_{n}(x,y)\leq J_{0}(x,y)+\varepsilon $ for each $(x,y)\in B_{1}\times B_{2}$
and every $n\geq n_{0}$ for some $n_{0}$. Then 
\begin{equation*}
\min_{x}J_{n}(x,y)\leq \min_{x}J_{0}(x,y)+\varepsilon ,
\end{equation*}%
\begin{equation*}
\max_{y}\min_{x}J_{n}(x,y)\leq \max_{y}\min_{x}J_{0}(x,y)+\varepsilon .
\end{equation*}%
Hence $a_{k}-a_{0}\leq \varepsilon $. Similarly one can show that $%
a_{k}-a_{0}\geq -\varepsilon $. Therefore $a_{k}\rightarrow a_{0}$.\bigskip 

Let $(x_{n},y_{n})\in V_{n}$ for $n=1,2,...$. Since 
\begin{equation*}
\{(x_{n},y_{n})\}_{n=1}^{\infty }\subset B_{1}\times B_{2}
\end{equation*}%
we may assume that $(x_{n},y_{n})\rightarrow (x_{0},y_{0})$. In particular $%
\limsup V_{n}\neq \emptyset $. Suppose now that $(x_{0},y_{0})\notin V_{0}$.
Let $(\overline{x},\overline{y})\in V_{0}$. Then $J_{0}(\overline{x},%
\overline{y})\neq J_{0}(x_{0},y_{0})$. Consider the case 
\begin{equation*}
J_{0}(\overline{x},\overline{y})-J_{0}(x_{0},y_{0})=\eta <0.
\end{equation*}%
Then%
\begin{equation*}
\begin{array}{l}
a_{n}-a_{0}=J_{n}(x_{n},y_{n})-J_{0}(x_{0},y_{0})=\bigskip  \\ 
\min_{x}J_{n}(x,y_{n})-J_{0}(x_{0},y_{0})\leq \bigskip  \\ 
\leq J_{n}(\overline{x},y_{n})-J_{0}(x_{0},y_{0})=\bigskip  \\ 
J_{n}(\overline{x},y_{n})-J_{0}(\overline{x},y_{n})+J_{0}(\overline{x}%
,y_{n})-J_{0}(\overline{x},\overline{y})+J_{0}(\overline{x},\overline{y}%
)-J_{0}(x_{0},y_{0}).%
\end{array}%
\end{equation*}
Since%
\begin{equation*}
J_{0}(\overline{x},\overline{y})=\max_{y}J_{0}(\overline{x},y)\geq J_{0}(%
\overline{x},y_{n}),
\end{equation*}%
then 
\begin{equation*}
\limsup_{n}J_{0}(\overline{x},y_{n})-J_{0}(\overline{x},\overline{y})\leq 0.
\end{equation*}%
By the continuity of $F$ we obtain that $J_{n}(\overline{x}%
,y_{n})\rightarrow J_{0}(\overline{x},y_{n})$. Therefore 
\begin{equation*}
\limsup_{n\rightarrow \infty }(a_{n}-a_{0})<\eta .
\end{equation*}%
A contradiction. Similarly, a contradiction can be obtained when $\eta >0$.
\end{proof}

Theorem \ref{condepd} combined with Theorem \ref{ExistSaddlePointSol} yield
the following main result of our note

\begin{theorem}
Assume H1-H5. For any fixed $u\in L_{D}$ there exists at least one solution $%
y\in V_{u}$ to problem (\ref{zad}). Let $\{u_{n}\}\subset L_{D}$ be a
convergent sequence of parameters, where $\underset{n\rightarrow \infty }{%
\lim }u_{n}=u_{0}\in L_{D}$. For any sequence $\{\left( x_{n},y_{n}\right) \}
$ of solutions $\left( x_{n},y_{n}\right) \in V_{n}$ to the problem (\ref%
{zad}) corresponding to $u_{n}$, there exist a subsequence $\{\left(
x_{n_{i}},y_{n_{i}}\right) \}\subset H\times H$ and an element $\left(
x_{0},y_{0}\right) \subset H\times H$ such that $\underset{i\rightarrow
\infty }{\lim }x_{n_{i}}=x_{0}$, $\underset{i\rightarrow \infty }{\lim }%
y_{n_{i}}=y_{0}$ and $J_{0}(x_{0},y_{0})=\max_{y}\min_{x}J_{0}(x,y)$.
Moreover $x_{0},y_{0}\in V_{0}$, i.e. the pair $\left( x_{0},y_{0}\right) $
satisfies (\ref{zad}) with $u=u_{0}$, namely 
\begin{equation*}
\left\{ 
\begin{array}{l}
\Delta ^{2}x_{0}(k-1)=F_{x}(k,x_{0}(k),y_{0}(k),u_{0}(k)),\bigskip  \\ 
\Delta ^{2}y_{0}(k-1)=-F_{y}(k,x_{0}(k),y_{0}(k),u_{0}(k)),\bigskip  \\ 
x_{0}(0)=x_{0}(T+1)=y_{0}(0)=y_{0}(T+1)=0.%
\end{array}%
\right. 
\end{equation*}
\end{theorem}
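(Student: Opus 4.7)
The plan is to glue together the three preceding results: Theorem~\ref{ExistSaddlePointSol} for existence, Theorem~\ref{ExistSaddlePoints} for a uniform a priori bound on saddle points, and Theorem~\ref{condepd} for convergence. For the first assertion, I would apply Theorem~\ref{ExistSaddlePointSol} to the fixed $u\in L_D$ to produce a saddle point $(x_u,y_u)\in V_u$; being a critical point of the G\^ateaux-differentiable functional $J_u$, this pair automatically solves (\ref{zad}). Applying the same theorem with $u=u_0$ and with each $u_n$ shows that $V_0$ and every $V_n$ are nonempty, so one can legitimately choose $(x_n,y_n)\in V_n$.

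Next I would establish that the whole sequence $(x_n,y_n)$ lies in a fixed compact subset of $H\times H$. The key observation is that in the proof of Theorem~\ref{ExistSaddlePoints} the radii $r_1,r_2$ of the balls $B_1,B_2$ were produced from the constants $\alpha_i,\beta_i$ and functions $\gamma_i$ supplied by H2 and H3, which by hypothesis are valid uniformly over all $u\in\mathbb{R}$ with $|u|\leq D$. Consequently the balls $B_1$ and $B_2$ may be chosen once and for all so that $V_u\subset B_1\times B_2$ for every $u\in L_D$; in particular $(x_n,y_n)\in B_1\times B_2$ for all $n$. Since $H$ is finite-dimensional, $B_1\times B_2$ is closed and bounded and hence compact, so some subsequence $(x_{n_i},y_{n_i})$ converges to a point $(x_0,y_0)\in B_1\times B_2$.

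Finally I would invoke Theorem~\ref{condepd}: along the convergent parameter sequence $u_n\to u_0$, the limit of any convergent subsequence of elements $(x_{n_i},y_{n_i})\in V_{n_i}$ belongs to $\limsup_{n\to\infty} V_n\subset V_0$, so $(x_0,y_0)\in V_0$. Equivalently, $J_0(x_0,y_0)=\max_y\min_x J_0(x,y)$, and the characterization of saddle points as zeros of $J_0'$ (valid thanks to H4--H5 together with the G\^ateaux-differentiability of $J_0$ inherited from H1) yields precisely the displayed system with $u=u_0$.

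The only point that genuinely requires attention is the uniform-in-$u$ inclusion $V_u\subset B_1\times B_2$, i.e.\ tracing through the proof of Theorem~\ref{ExistSaddlePoints} and checking that each constant constructed there depends only on data that H2--H3 guarantee to be independent of $u\in L_D$. Once this uniform bound is in hand, the remainder of the argument is a straightforward bookkeeping combination of the earlier theorems with the Bolzano--Weierstrass property of the finite-dimensional space $H\times H$, and no further variational or compactness work is needed.
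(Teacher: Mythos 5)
Your proposal is correct and follows exactly the route the paper intends: the paper gives no separate proof of this theorem, stating only that it follows by combining Theorem~\ref{condepd} with Theorem~\ref{ExistSaddlePointSol}, which is precisely your bookkeeping argument. Your explicit verification that the balls $B_1,B_2$ from Theorem~\ref{ExistSaddlePoints} can be chosen uniformly in $u\in L_D$ (because H2--H3 hold uniformly for $|u|\leq D$) is exactly the point the paper itself flags when it remarks that the assumptions must be placed uniformly with respect to $u$ for the dependence result, and it is implicitly used in the paper's definition $V_n\subset B_1\times B_2$.
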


\begin{tabular}{l}
Marek Galewski, Szymon G\l \c{a}b\smallskip  \\ 
Institute of Mathematics,\smallskip  \\ 
Technical University of Lodz,\smallskip  \\ 
Wolczanska 215, 90-924 Lodz, Poland,\smallskip  \\ 
marek.galewski@p.lodz.pl,\smallskip  \\ 
szymon.glab@p.lodz.pl\smallskip 
\end{tabular}

\end{document}